\newfont{\cyrfnt}{wncyr10}
\newtheorem{theorem}{Theorem}
\newtheorem{lemma}{Lemma}
\newtheorem{remark}{Remark}
\newenvironment{definition}
{\smallskip\noindent{\bf Definition\/}:}{\smallskip\par}
\newenvironment{examples}
{\smallskip\noindent{\bf Examples\/}.}{\smallskip\par}
\newenvironment{proof}{\begin{ProofwCaption}{Proof}}{\end{ProofwCaption}}
\newenvironment{proof*}[1]{\begin{ProofwCaption}{{#1}}}{\end{ProofwCaption}}
\newenvironment{ProofwCaption}[1]%
  {\addvspace\theorempreskipamount \noindent{\it #1.}\rm}%
  {\qed \par \addvspace\theorempostskipamount}
\newcommand{\qedsymbol}{{\rm $\Box$}}
\newcommand{\qed}{\hfill\qedsymbol}
\newcommand{\CC}{{\mathbb C}}
\newcommand{\ZZ}{{\mathbb Z}}
\newcommand{\calG}{{\mathcal G}}
\newcommand{\Ind}{{\rm Ind}\,}
\newcommand{\chiorb}{{\chi}^{\rm orb}}
\newcommand{\chiorbbar}{{\overline{\chi}}^{\,\rm orb}}
\title{On the orbifold Euler characteristics of dual invertible polynomials with non-abelian
symmetry groups}
\author{Wolfgang Ebeling and Sabir M.~Gusein-Zade
\thanks{Partially supported by DFG. The work of the second author
(Sections~\ref{sect:Intro}, \ref{sect:NC-Saito}, \ref{sect:abelian_Euler})
was supported by the grant 16-11-10018 of the Russian Science Foundation.
Keywords: group action, invertible polynomial, mirror symmetry, Berglund--H\"ubsch--Henningson duality,
equivariant Euler characteristic, Saito duality.
Mathematical Subject Classification -- MSC2010: 14J33, 57R18, 32S55, 19A22.
}
}
\date{}
\begin{document}
\selectlanguage{english}

\maketitle

\begin{abstract}
In the framework of constructing mirror symmetric pairs of Calabi-Yau manifolds, P.~Berglund, T.~H\"ubsch
and M.~Henningson considered a pair $(f,G)$ consisting of an invertible polynomial $f$ and a finite abelian
group $G$ of its diagonal symmetries and associated to this pair a dual pair $(\widetilde{f}, \widetilde{G})$.
A.~Takahashi suggested a generalization of this construction to pairs $(f, G)$ where $G$ is a non-abelian group generated by some diagonal symmetries and some permutations of variables. In a previous paper, the authors showed that some mirror
symmetry phenomena appear only under a special condition on the action of the group $G$: a parity condition.
Here we consider the orbifold Euler characteristic of the Milnor fibre of a pair $(f,G)$. We show that,
for an abelian group $G$, the mirror symmetry of the orbifold Euler characteristics can be derived from
the corresponding result about the equivariant Euler characteristics. For non-abelian symmetry groups we
show that the orbifold Euler characteristics of certain extremal orbit spaces of the group $G$
and the dual group $\widetilde{G}$ coincide. From this we derive that the orbifold Euler
characteristics of the Milnor fibres of dual periodic loop polynomials coincide up to sign.
\end{abstract}

\section{Introduction} \label{sect:Intro}
The famous method of P.~Berglund and T.~H\"ubsch \cite{BH1} associates to a so-called invertible polynomial its
transpose polynomial. More generally, Berglund and M.~Henningson \cite{BH2} considered a pair $(f,G)$ consisting
of an invertible polynomial $f$ and a finite abelian group $G$ of diagonal symmetries of $f$. They associate
to this pair a dual pair $(\widetilde{f}, \widetilde{G})$ which we call the Berglund-H\"ubsch-Henningson (BHH)
dual pair. The purpose of their construction was to obtain mirror symmetric pairs of Calabi-Yau manifolds.
There were discovered mirror symmetry phenomena concerning the elliptic genera of BHH dual pairs
\cite{BH2,Kawai-Yang}. In a series of papers \cite{EG-BLMS, EG-MMJ, EG-PEMS} and partly in joint work
with A.~Takahashi \cite{EGT}, we started to look at these pairs from a singularity theory point of view.
In \cite{EG-BLMS}, we studied the equivariant Euler characteristic of the Milnor fibre of $f$ with the action of $G$
defined as an element of the Burnside ring of the group. We constructed a duality between the Burnside rings
of a finite abelian group and of its group of characters which generalizes the Saito duality and we showed
that this duality is related to the BHH duality. 
Namely, it was shown that the reduced equivariant Euler
characteristics of the Milnor fibres of dual pairs are dual to each other up to sign. In \cite{EG-MMJ},
we studied the reduced orbifold Euler characteristic of the Milnor fibre of such a pair and we showed
that for dual pairs these invariants coincide up to sign.
(In some papers, symmetry properties of BHH-dual pairs were considered under the
 condition that the symmetry group $G$ is a so-called admissible group. In \cite{EG-BLMS, EG-MMJ, EG-PEMS, EGT},
 no conditions on $G$ are imposed.)

Recently, there has been some interest in generalizing the construction to non-abelian groups of symmetries
(cf.\ \cite{FJR2015}). Following an idea of Takahashi, we considered a semi-direct product $G \rtimes S$ where
$G$ is a subgroup of the group $G_f$ of all diagonal symmetries of $f$ and $S$ is a subgroup of the group $S_n$
of permutations of the variables preserving $f$ and respecting $G$. Takahashi proposed a natural candidate for
the group dual to $G \rtimes S$. With this construction, we generalized the Saito duality between Burnside rings
to this case of non-abelian groups and proved a ``non-abelian'' generalization of the statement about equivariant
Euler characteristics \cite{EG-NC}. It turned out that the statement only holds under a special condition
on the action of the subgroup $S$ of the permutation group called PC (``parity condition''). Moreover,
it turned out that the pairs from a collection given in \cite{Yu} dual in the sense of Takahashi
but not satisfying the PC condition are not mirror symmetric.

Here we consider the orbifold Euler characteristic of the Milnor fibre of a pair $(f,G)$. We show that,
for an abelian group $G$, the result of \cite{EG-MMJ} about the orbifold Euler characteristics can be derived
from the corresponding result about the equivariant Euler characteristics. Then we consider a non-abelian
group of the above form. Let $S$ be a subgroup of $S_n$ satisfying PC and let $T \subset S$ be a subgroup of $S$.
As the main result of this paper, we derive that the orbifold Euler characteristics of certain extremal orbit spaces
of the group $G\rtimes S$ and the dual group $\widetilde{G\rtimes S}$ coincide
(see Theorem~\ref{thm:main}). We derive from this that the orbifold Euler characteristics 
of the Milnor fibres of dual periodic loop polynomials coincide up to sign.

\section{Invertible polynomials and their symmetry groups} \label{sect:invertible_poly}
An invertible polynomial in $n$ variables is a quasihomogeneous polynomial $f$ with the number of monomials 
equal to the number $n$ of variables (that is 
\[ f(x_1, \ldots , x_n)=\sum_{i=1}^n a_i \prod_{j=1}^n x_j^{E_{ij}} \, ,
\]
where $a_i$ are non-zero complex numbers and $E_{ij}$ are non-negative integers) such that the matrix
$E=(E_{ij})$ is non-degenerate and $f$ has an isolated critical point at the origin. 

\begin{remark}
The condition $\det{E}\ne 0$ is equivalent to the condition that the weights $q_1$, \dots, $q_n$ of
the variables in the polynomial $f$ are well defined
(if one assumes the quasidegree to be equal to $1$). In fact they are defined by the equation
$$
E\cdot(q_1, \ldots, q_n)^T=(1,\ldots, 1)^T\,.
$$
Without loss of generality one may assume that all the coefficients $a_i$ are equal to $1$.
\end{remark}

A classification of invertible polynomials is given in \cite{KS}. Each invertible polynomial is the direct
(``Sebastiani--Thom'') sum of atomic polynomials in different sets of variables of the following types:
\begin{enumerate}
 \item[1)] chains: $x_1^{p_1}x_2+x_2^{p_2}x_3+\ldots+x_{m-1}^{p_{m-1}}x_m+x_m^{p_m}$,
 $m\ge 1$\,;
 \item[2)] loops: $x_1^{p_1}x_2+x_2^{p_2}x_3+\ldots+x_{m-1}^{p_{m-1}}x_m+x_m^{p_m}x_1$,
 $m\ge 2$\,.
\end{enumerate}

The group of the diagonal symmetries of $f$ is
$$
G_f=\{\underline{\lambda}=(\lambda_1, \ldots, \lambda_n)\in(\CC^*)^n:
f(\lambda_1x_1, \ldots, \lambda_nx_n)=f(x_1, \ldots, x_n)\}\,.
$$
One can see that $G_f$ is an abelian group of order $\vert\det E\,\vert$.
For an atomic polynomial the group $G_f$ is cyclic.
The Milnor fibre $V_f=\{x\in\CC^n:f(x)=1\}$ of the invertible polynomial $f$
is a complex manifold of dimension $n-1$ with the natural action of the group $G_f$. 

The Berglund--H\"ubsch transpose of $f$ is
$$
\widetilde{f}(x_1, \ldots, x_n)=\sum_{i=1}^n \prod_{j=1}^n x_j^{E_{ji}}\,.
$$
The group $G_{\widetilde{f}}$ of the diagonal symmetries of $\widetilde{f}$ is in a canonical way isomorphic
to the group $G_f^*={\rm Hom}(G_f,\CC^*)$ of characters of $G_f$ (see, e.~g., \cite[Proposition 2]{EG-BLMS}).
For a subgroup $G$ of $G_f$, the {\em Berglund--H\"ubsch--Henningson} (BHH) {\em dual} to the pair $(f,G)$ is the pair
$(\widetilde{f},\widetilde{G})$, where $\widetilde{G}\subset G_{\widetilde{f}}=G_f^*$ is the subgroup of
characters of $G_f$ vanishing (i.e.\ being equal to $1$) on the subgroup $G$.

Let the permutation group $S_n$ act on the space $\CC^n$ by permuting the variables. If an invertible polynomial
$f$ is invariant with respect to a subgroup $S\subset S_n$, then it is invariant with respect to the semidirect
product $G_f\rtimes S$ (defined by the natural action of $S$ on $G_f$). The Milnor fibre $V_f$
of the polynomial $f$ carries an action of the group $G_f\rtimes S$.

Let $G$ be a subgroup of $G_f$ invariant with respect to the group $S$. In this case the semidirect product
$G\rtimes S$ is defined and the BHH dual subgroup $\widetilde G$ is also invariant with respect to $S$.
An idea to define a pair dual to $(f,G\rtimes S)$ was suggested by A.Takahashi.

\begin{definition}
 The {\em Berglund--H\"ubsch--Henningson--Takahashi} (BHHT) {\em dual} to the pair $(f,G\rtimes S)$ is the pair
$(\widetilde{f},\widetilde{G}\rtimes S)$.
\end{definition}

The Burnside ring $A(H)$ of a finite group $H$ is the Grothendieck ring of finite $H$-sets: see, e.~g.,
\cite{Knutson}. As an abelian group, $A(H)$ is freely generated by the classes $[H/K]$ of the quotient sets $H/K$
for representatives $K$ of the conjugacy classes of subgroups of $H$. For an $H$-space $X$ and for a point $x\in X$
the isotropy subgroup of $x$ is $H_x :=\{g\in H: gx=x\}$. For a subgroup $K\subset H$ the set of fixed points of $K$
(that is points $x$ with $H_x\subset K$) is denoted by $X^K$; the set of points $x\in X$ with the isotropy subgroup
$K$ is denoted by $X^{(K)}$, the set of points $x\in X$ with the isotropy subgroup conjugate to $K$ is denoted by
$X^{([K])}$. 
For a ``sufficiently nice'' topological space $Z$, denote by $\chi(Z)$ its (additive)
Euler characteristic, i.e.\ the alternating sum of the ranks of the cohomology groups
with compact support.
The equivariant Euler characteristic of a topological $H$-space $X$ is
the element of the Burnside ring $A(H)$ defined by
\begin{equation}\label{eq:equiv_chi}
 \chi^H(X) :=\sum_{[K]\in{\rm Conjsub\,}H}\chi(X^{([K])}/H)[H/K]\,,
\end{equation}
where ${\rm Conjsub\,}H$ is the set of the conjugacy classes of subgroups of $H$.
The reduced equivariant Euler characteristic $\overline{\chi}^H(X)$ is $\chi^H(X)-\chi^H(pt)=\chi^H(X)-[H/H]$.

The {\em orbifold Euler characteristic} of the pair $(X, H)$ is defined by 
\begin{equation} 
\chiorb(X,H)=\frac{1}{\vert H\vert}\sum_{(g,h)\in H^2:gh=hg} \chi(X^{\langle g,h\rangle})\,,
\end{equation}
where $\langle g,h\rangle$ is the subgroup of $H$ generated by $g$ and $h$ (see \cite{AS, HH} and references therein).
The {\em reduced orbifold Euler characteristic} of an $H$-set $X$ is defined as 
${\chiorbbar}(X,H)= \chiorb(X,H)- \chiorb(pt,H)$, where $\chiorb(pt,H)=\vert {\rm Conj\,}H \vert$
is the orbifold Euler characteristic of a one-point set with the only $H$-action.
For an abelian group $H$ one has $\chiorb(pt,H)=\vert H\vert$.

Since each element of the Burnside ring $A(H)$ is represented by a zero-dimensional space with an $H$-action,
its orbifold Euler characteristic is defined. This defines a group (not a ring!) homomorphism $\chiorb:A(H)\to\ZZ$.
Moreover, for an $H$-space $X$ one has $\chiorb(X,H)=\chiorb(\chi^H(X))$.

If $H$ is a subgroup of a finite group $G$, one has the {\em reduction} and the {\em induction} operations
${\rm Red}^G_H$ and ${\rm Ind}^G_H$ which convert $G$-spaces to $H$-spaces and $H$-spaces to $G$-spaces
respectively. The reduction ${\rm Red}^G_HX$ of a $G$-space $X$ is the same space considered with the action
of the smaller subgroup. (One can say that ${\rm Red}^G_HX$ converts a pair $(X,G)$ to the pair $(X,H)$.)
The induction ${\rm Ind}^G_HX$ of an $H$-space $X$ is the quotient space $(G\times X)/\sim$,
where the equivalence relation $\sim$ is defined by: $(g_1,x_1)\sim(g_2,x_2)$ if (and only if) there exists $h\in H$
such that $g_2=g_1h$, $x_2=h^{-1}x_1$; the $G$-action on it is defined in the natural way. Applying the reduction and
the induction operations to finite $G$- and $H$-sets respectively, one gets the {\em reduction homomorphism}
${\rm Red}^G_H:A(G)\to A(H)$ and the {\em induction homomorphism} ${\rm Ind}^G_H:A(H)\to A(G)$. For a subgroup $K$
of $H$, one has ${\rm Ind}^G_H[H/K]=[G/K]$. The reduction homomorphism is a ring homomorphism, whereas the
induction one is a homomorphism of abelian groups.

For a finite abelian group $H$, let $H^*={\rm Hom}(H,\CC^*)$ be its group of characters. Just as for a subgroup
of $G_f$ above, for a subgroup $K\subset H$, the (BHH) dual subgroup of $H^*$ is
$$
\widetilde{K} :=\{\alpha\in H^*: \alpha(g)=1 \text{ for all } g\in K\}\,.
$$
The equivariant Saito duality (see \cite{EG-BLMS}) is the group homomorphism $D_H:A(H)\to A(H^*)$ defined by
$D_H([H/K]) :=[H^*/\widetilde{K}]$. In \cite{EG-BLMS}, it was shown that 
\begin{equation}\label{eq:Saito_dual}
 \overline{\chi}^{G_f}(V_f) = (-1)^n D_{G_{\widetilde{f}}} \overline{\chi}^{G_{\widetilde{f}}}(V_{\widetilde{f}})\,,
\end{equation}
i.e.\ the reduced equivariant Euler characteristics
of the Milnor fibres of Berglund--H\"ubsch dual invertible polynomials $f$ and $\widetilde{f}$ with the actions
of the groups $G_f$ and $G_{\widetilde{f}}$ respectively are Saito dual to each other up to the sign $(-1)^n$.

\section{Non-abelian equivariant Saito duality}\label{sect:NC-Saito}
Let $G$ be a finite abelian group and let $S$ be a finite group with a homomorphism $\varphi:S\to {\text{Aut\,}}G$.
These data determine the semi-direct product $\widehat{G}=G\rtimes S$. Let $A^{\rtimes}(G\rtimes S)$ be the
Grothendieck group of finite $\widehat{G}$-sets with the isotropy subgroups of points conjugate to
$H\rtimes T\subset G\rtimes S$, where $H$ and $T$ are subgroups of $G$ and of $S$ respectively such that,
for $\sigma\in T$, the automorphism $\varphi(\sigma)$ preserves $H$. (The semidirect product structure on $H\rtimes T$
is defined by the homomorphism $\varphi_{\vert T}:T\to {\text{Aut\,}}H$.) The group $A^{\rtimes}(G\rtimes S)$ is
a subgroup of the Burnside ring $A(\widehat{G})$ of the group $\widehat{G}$.
It is the free abelian group generated by the conjugacy classes of the subgroups of the form $H\rtimes T$.
An element of $A^{\rtimes}(G\rtimes S)$ can be written in a unique way as 
$$
\sum_{[H\rtimes T]\in {\text{Conjsub\, }}\widehat{G}} a_{H\rtimes T}\left[G\rtimes S/H\rtimes T\right]
$$
with integers $a_{H\rtimes T}$.

Let $G^*={\text{Hom\, }}(G,\CC^*)$ be the group of characters on $G$. One has $G^{**}\cong G$ (canonically).
The homomorphism $\varphi:S\to {\text{Aut\,}}G$ induces a natural homomorphism $\varphi^*:S\to {\text{Aut\,}}G^*$:
$\langle\varphi^*(\sigma)\alpha, g\rangle=\langle\alpha, \varphi(\sigma^{-1})g\rangle$, where
$\langle\alpha, g\rangle:=\alpha(g)$. Let $\widehat{G}^*:=G^*\rtimes S$ be the semidirect product defined by the
homomorphism $\varphi^*$.
One can see that, if $\varphi(\sigma)$ preserves a subgroup $H\subset G$, then $\varphi^*(\sigma)$
preserves the subgroup $\widetilde{H}\subset G^*$. Thus for a semidirect product $H\rtimes T\subset G\rtimes S$
one has the semidirect product $\widetilde{H}\rtimes T\subset G^*\rtimes S$.

One can show that
 subgroups $H_1\rtimes T_1$ and $H_2\rtimes T_2$ are conjugate in $G\rtimes S$ if and only if the subgroups
 $\widetilde{H_1}\rtimes T_1$ and $\widetilde{H_2}\rtimes T_2$ are conjugate in $G^*\rtimes S$ (see \cite[Proposition~2]{EG-NC}).
Therefore the following definition makes sense.

\begin{definition}
 The {\em (``non-abelian'') equivariant Saito duality} corresponding to the group $\widehat{G}=G\rtimes S$
 is the group homomorphism $D^{\rtimes}_{\widehat{G}}:A^{\rtimes}(G\rtimes S)\to A^{\rtimes}(G^*\rtimes S)$
 defined (on the generators) by
 $$
 D^{\rtimes}_{\widehat{G}}([G\rtimes S/H\rtimes T])=
 [G^*\rtimes S/\widetilde{H}\rtimes T]\,.
 $$
\end{definition}

One can see that $D^{\rtimes}_{\widehat{G}}$ is an isomorphism of the groups $A^{\rtimes}(G\rtimes S)$ and
$A^{\rtimes}(G^*\rtimes S)$ and $D^{\rtimes}_{\widehat{G^*}}D^{\rtimes}_{\widehat{G}}={\rm id}$\,.

For a subgroup $S'\subset S$ one has the natural homomorphism
$\Ind_{G\rtimes S'}^{G\rtimes S}:A^{\rtimes}(G\rtimes S')\to A^{\rtimes}(G\rtimes S)$ sending 
the generator $\left[G\rtimes S'/H\rtimes T\right]$ to the generator $\left[G\rtimes S/H\rtimes T\right]$.
This homomorphism commutes with the Saito duality, i.e.\ the diagram
$$
\begin{CD}
A^{\rtimes}(G\rtimes S') @>D^{\rtimes}_{G\rtimes S'}>> A^{\rtimes}(G^*\rtimes S')\\
@VV\Ind_{G\rtimes S'}^{G\rtimes S}V
@VV\Ind_{G^*\rtimes S'}^{G^*\rtimes S}V
\\
A^{\rtimes}(G\rtimes S) @>D^{\rtimes}_{G\rtimes S}>> A^{\rtimes}(G^*\rtimes S)
\end{CD}
$$
is commutative.

Let $f$ be an invertible polynomial invariant with respect to a subgroup $S \subset S_n$.

\begin{definition}
 We say that a subgroup $S\subset S_n$ satisfies the {\em parity condition} (``PC'' for short) if for each subgroup
 $T\subset S$ one has
 $$
 \dim (\CC^n)^T\equiv n\ \ \mod 2\,,
 $$
 where $(\CC^n)^T=\{\underline{x}\in \CC^n: \sigma \underline{x}= \underline{x} \text{ for all } \sigma\in T\}$.
\end{definition}

\begin{examples}
 {\bf 1.} A subgroup $S\subset S_n$ satisfying PC is contained in the alternating group
 $A_n\subset S_n$. A cyclic subgroup of $S_n$ satisfies PC if and only if it is contained in $A_n$.

{\bf 2.} For $n\ge 4$, the subgroup $A_n\subset S_n$ does not satisfy PC.

{\bf 3.} The subgroup $S=\langle (12)(34), (13)(24) \rangle\subset A_4$ isomorphic to
$\ZZ_2 \times \ZZ_2$ does not satisfy PC. The group $\langle (12345), (12)(34) \rangle \subset A_5$ coincides
with $A_5$ and therefore does not satisfy PC. The group $\langle (12345), (14)(23) \rangle \subset A_5$
is isomorphic to the dihedral group $D_{10}$ and satisfies PC.
\end{examples}

Let $V_f$ denote the Milnor fibre of $f$. From \cite[Proposition~3]{EG-NC} one can derive 
that the equivariant Euler characteristic of the Milnor fibre $V_f$ of a polynomial $f$ with
the $G_f \rtimes S$-action belongs to the subgroup $A^{\rtimes}(G_f \rtimes S) \subset A(G_f \rtimes S)$.
Let
${\widehat{G}}_f=G_f\rtimes S$, ${\widehat{G}}_{\widetilde{f}}=G_{\widetilde{f}}\rtimes S$.
It was proved in \cite[Theorem~1]{EG-NC} that, if the subgroup $S\subset S_n$ satisfies PC, then one has
 \begin{equation}\label{eq:NC_Saito}
   \overline{\chi}^{G_f \rtimes S}(V_{f})
  =(-1)^n D_{G_{\widetilde{f}} \rtimes S}^{\rtimes} \overline{\chi}^{G_{\widetilde{f}} \rtimes S}(V_{\widetilde{f}})\,. 
 \end{equation}

\section{Orbifold Euler characteristic: the abelian case}\label{sect:abelian_Euler}
Let $f$ be an invertible polynomial in $n$ variables, $G$ be a subgroup of $G_f$, and $(\widetilde{f},\widetilde{G})$ the BHH-dual pair.

In \cite{EG-MMJ} it was shown that the reduced orbifold Euler characteristics of the Milnor fibres of
$(f, G)$ and $(\widetilde{f},\widetilde{G})$ 
(that is of the Milnor fibres
of the polynomials $f$ and $\widetilde{f}$ with the actions of the groups $G$ and $\widetilde{G}$ respectively)
coincide up to the sign $(-1)^n$. At that moment we did not realize relations between the equivariant Euler
characteristic and the orbifold one. (Moreover, taking into account the fact that the monodromy transformation of
an invertible polynomial is an element of the symmetry group $G_f$, we considered $\chi^G(V_f)$ rather not as
a generalization of the Euler characteristic, but as an equivariant version of the monodromy zeta function (and
even did not denote it by $\chi^G(V_f)$, but by $\zeta_f^{G_f}$).) Therefore the proof of \cite[Theorem~1]{EG-MMJ}
was independent of the result of \cite{EG-BLMS} (though the ideas elaborated in the proof of the result of
\cite{EG-BLMS} were used in \cite{EG-MMJ} as well).
Later we understood that the result of \cite{EG-MMJ} follows directly from \cite[Theorem~1]{EG-BLMS} due to the
following statement. 
Let $\calG$ be a finite abelian group with the group of characters $\calG^*={\rm Hom}(\calG,\CC^*)$,
let $G$ be a subgroup of $\calG$ and let $\widetilde{G}$ be the BHH-dual subgroup of $\calG^*$
(i.e.\ $\widetilde{G}=\{\alpha\in \calG^*: \alpha(g)=1 \text{\ \ for all\ \ }g\in G\}$).

\begin{theorem} \label{theo:abelian}
One has
\begin{equation}\label{equi2orb}
\chiorb\circ {\rm Red}^\calG_G = \chiorb\circ {\rm Red}^{\calG^*}_{\widetilde{G}}\circ D_{\calG^*}\,.
\end{equation}
(Both sides of~(\ref{equi2orb}) are group homomorphisms from the Burnside ring $A(\calG)$ to $\ZZ$.)
\end{theorem}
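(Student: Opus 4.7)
The plan is to reduce Theorem~\ref{theo:abelian} to a check on the generators $[\calG/K]$ of $A(\calG)$, where $K$ ranges over subgroups of $\calG$, since both sides of~(\ref{equi2orb}) are group homomorphisms into $\ZZ$. Explicit computation of each side will then collapse the identity to the classical subgroup order formula $|G\cap K|\cdot|GK|=|G|\cdot|K|$.

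First I would evaluate the left-hand side on $[\calG/K]$. Since $\calG$ is abelian, every pair of elements in $G$ commutes, so
$$
\chiorb(\calG/K,G)=\frac{1}{|G|}\sum_{g_1,g_2\in G}\bigl|(\calG/K)^{\langle g_1,g_2\rangle}\bigr|.
$$
A coset $aK$ is fixed by $g\in G$ precisely when $g\in K$, so $(\calG/K)^g$ equals $\calG/K$ when $g\in G\cap K$ and is empty otherwise. Only pairs $(g_1,g_2)\in (G\cap K)\times(G\cap K)$ contribute, yielding
$$
\chiorb(\calG/K,G)=\frac{|G\cap K|^2\cdot|\calG|}{|G|\cdot|K|}.
$$

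Next I would treat the right-hand side. By definition, Saito duality sends $[\calG/K]$ to $[\calG^*/\widetilde{K}]$; after restriction to $\widetilde{G}\subset\calG^*$ the same formula gives
$$
\chiorb(\calG^*/\widetilde{K},\widetilde{G})=\frac{|\widetilde{G}\cap\widetilde{K}|^2\cdot|\calG^*|}{|\widetilde{G}|\cdot|\widetilde{K}|}.
$$
Using the standard duality facts $|\widetilde{H}|=|\calG|/|H|$ for any subgroup $H\subset\calG$, the identification $\widetilde{G}\cap\widetilde{K}=\widetilde{GK}$ (a character vanishes on both $G$ and $K$ iff it vanishes on the subgroup $GK$ they generate), and $|\calG^*|=|\calG|$, I obtain
$$
\chiorb(\calG^*/\widetilde{K},\widetilde{G})=\frac{|G|\cdot|K|\cdot|\calG|}{|GK|^2}.
$$

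Comparing the two expressions, the desired equality reduces to
$$
|G\cap K|^2\cdot|GK|^2=|G|^2\cdot|K|^2,
$$
which is the square of the classical identity $|G\cap K|\cdot|GK|=|G|\cdot|K|$ for subgroups of a finite abelian group. I do not foresee any substantial obstacle here: the whole content of the theorem turns out to be this counting identity repackaged through the Saito duality on Burnside rings. The only step that requires some care is the annihilator bookkeeping, in particular the identity $\widetilde{G}\cap\widetilde{K}=\widetilde{GK}$, together with keeping track of the direction of the duality map on generators.
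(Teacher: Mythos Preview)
Your proof is correct and follows essentially the same approach as the paper: both check the identity on generators $[\calG/K]$, compute $\chiorb(\calG/K,G)$ from the fact that $g\in G$ fixes a coset iff $g\in K$, and use the standard annihilator identities $\widetilde{G}\cap\widetilde{K}=\widetilde{G+K}$ and $|\widetilde{H}|=|\calG|/|H|$ on the dual side. The only cosmetic difference is that the paper first decomposes $\calG/K$ into $G$-orbits, obtaining $\chiorb(\calG/K,G)=\dfrac{|\calG|}{|K+G|}\cdot|K\cap G|$ on each side directly, whereas your direct summation yields two expressions whose equality is the square of $|G\cap K|\cdot|GK|=|G|\cdot|K|$; these are of course equivalent.
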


\begin{proof}
It is sufficient to verify~(\ref{equi2orb}) on the generators $[\calG/K]$ of $A(\calG)$. One has
\begin{eqnarray*}
\chiorb\circ {\rm Red}^\calG_G([\calG/K]) &= & \chiorb(\calG/K,G)\\
&=&\frac{\vert \calG\vert}{\vert K+G\vert}\cdot\chiorb((K+G)/K,G)=
\frac{\vert \calG\vert}{\vert K+G\vert}\cdot\vert K\cap G\vert;
\end{eqnarray*}
\begin{eqnarray*}
 \chiorb\circ {\rm Red}^{\calG^*}_{\widetilde{G}}\circ D_{\calG^*}([\calG/K])&=&
 \chiorb\circ {\rm Red}^{\calG^*}_{\widetilde{G}}([\calG^*/\widetilde{K}])=
 \frac{\vert \calG^*\vert}{\vert\widetilde{K}+\widetilde{G}\vert}\vert\cdot\vert \widetilde{K}\cap \widetilde{G}\vert\\
 \ &=&\frac{\vert \calG^*\vert}{\vert\widetilde{K\cap G}\vert}\vert\cdot\vert \widetilde{K+G}\vert=
 \vert K\cap G\vert\cdot \frac{\vert \calG\vert}{\vert K+G\vert}\,.
\end{eqnarray*}
\end{proof}

Applied to (\ref{eq:Saito_dual}), Theorem~\ref{theo:abelian} with $\calG=G_f$, $\calG^*=G_{\widetilde{f}}$
implies the following statement.

\begin{theorem}[\cite{EG-MMJ}]\label{theo:Euler_abelian}
\begin{equation}
\chiorbbar(V_f,G) = (-1)^n\, \chiorbbar(V_{\widetilde{f}}, \widetilde{G}).
\end{equation}
\end{theorem}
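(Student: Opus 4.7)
The plan is to deduce Theorem~\ref{theo:Euler_abelian} by feeding the equivariant Saito duality~(\ref{eq:Saito_dual}) into the intertwining relation~(\ref{equi2orb}) of Theorem~\ref{theo:abelian}. The key conceptual ingredient is the identity $\chiorb(X,H)=\chiorb(\chi^H(X))$, together with the fact that restricting the action of a supergroup $\calG$ to a subgroup $H$ realises $\chi^H(X)$ as ${\rm Red}^\calG_H(\chi^\calG(X))$. So both the orbifold Euler characteristic of $(V_f,G)$ and that of $(V_{\widetilde{f}},\widetilde{G})$ can be expressed as compositions of $\chi^{G_f}(V_f)$, $\chi^{G_{\widetilde{f}}}(V_{\widetilde{f}})$ with $\chiorb\circ{\rm Red}$, and~(\ref{eq:Saito_dual}) links these two inputs.

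Concretely, first I would specialise Theorem~\ref{theo:abelian} to $\calG=G_{\widetilde{f}}$, $\calG^*=G_f$, with the subgroup being $\widetilde{G}\subset G_{\widetilde{f}}$. Using $\widetilde{\widetilde{G}}=G$, the identity~(\ref{equi2orb}) becomes
$$
\chiorb\circ {\rm Red}^{G_{\widetilde{f}}}_{\widetilde{G}} \;=\; \chiorb\circ {\rm Red}^{G_{f}}_{G}\circ D_{G_{\widetilde{f}}}
$$
as homomorphisms $A(G_{\widetilde{f}})\to\ZZ$. Evaluating both sides on $\overline{\chi}^{G_{\widetilde{f}}}(V_{\widetilde{f}})=\chi^{G_{\widetilde{f}}}(V_{\widetilde{f}})-[G_{\widetilde{f}}/G_{\widetilde{f}}]$, the left-hand side gives $\chiorb(V_{\widetilde{f}},\widetilde{G})-|\widetilde{G}|=\chiorbbar(V_{\widetilde{f}},\widetilde{G})$, using $\chiorb(pt,\widetilde{G})=|\widetilde{G}|$ for the abelian group $\widetilde{G}$. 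For the right-hand side, (\ref{eq:Saito_dual}) yields
$$
D_{G_{\widetilde{f}}}\bigl(\overline{\chi}^{G_{\widetilde{f}}}(V_{\widetilde{f}})\bigr)=(-1)^n\,\overline{\chi}^{G_f}(V_f),
$$
so the same manipulation at $G$ turns the right-hand side into $(-1)^n\chiorbbar(V_f,G)$. Comparing and multiplying by $(-1)^n$ produces the claimed equality $\chiorbbar(V_f,G)=(-1)^n\chiorbbar(V_{\widetilde{f}},\widetilde{G})$.

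The substantive work has already been carried out in Theorem~\ref{theo:abelian}; what remains is essentially book-keeping. The only subtlety to watch for is the passage between reduced and unreduced equivariant classes: one must verify that the point-class contributions $[G_{\widetilde{f}}/G_{\widetilde{f}}]$ and $[G_f/G_f]$, when pushed through $\chiorb\circ{\rm Red}$, supply precisely $\chiorb(pt,\widetilde{G})$ and $\chiorb(pt,G)$ respectively, so that one reduced side converts to the other reduced side without stray constants. Once this is checked, the theorem is a one-line corollary, which matches the authors' remark that the result of~\cite{EG-MMJ} had originally been proved independently but is in fact a direct consequence of~\cite[Theorem~1]{EG-BLMS} via Theorem~\ref{theo:abelian}.
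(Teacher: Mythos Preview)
Your argument is correct and is essentially the same as the paper's: the authors simply state that applying Theorem~\ref{theo:abelian} with $\calG=G_f$, $\calG^*=G_{\widetilde{f}}$ to equation~(\ref{eq:Saito_dual}) yields the result, which is precisely your deduction (you specialise with $\calG=G_{\widetilde{f}}$ instead of $\calG=G_f$, but since $D_{G_f}$ and $D_{G_{\widetilde{f}}}$ are mutually inverse this is the same argument read in the opposite direction). Your remark about the point-class contributions is the only detail the paper leaves implicit, and you handle it correctly.
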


\section{Orbifold Euler characteristic: the non-abelian case}\label{sect:non-abelian_Euler}
Let $S$ be a subgroup of $S_n$, 
let $f$ be an invertible polynomial invariant with respect to $S$,
and let $G$ be a subgroup of $G_f$ preserved by $S$.
We do not know whether the natural analogue of Theorem~\ref{theo:abelian} holds 
for the group $\calG=G_f \rtimes S$, i.e., whether, for a subgroup $T$ of $S$, one has
\begin{equation}\label{equi2orb_non}
\chiorb\circ {\rm Red}^{G_f \rtimes S}_{G\rtimes T} = 
\chiorb\circ {\rm Red}^{G_{\widetilde{f}} \rtimes S}_{\widetilde{G}\rtimes T}\circ D_{G_{\widetilde{f}} \rtimes S}\,.
\end{equation}
(It seems that the condition PC is not important here.) Therefore, for a subgroup $S$ satisfying PC,
we cannot deduce an analogue of Theorem~\ref{theo:Euler_abelian} from Equation~(\ref{eq:NC_Saito}).
Equation~(\ref{equi2orb_non}) holds if and only if
 \begin{equation*}
  \chi^{\rm orb}(G_f\rtimes S/H\rtimes T, G\rtimes S)=
  \chi^{\rm orb}(G_{\widetilde{f}}\rtimes S/\widetilde{H}\rtimes T,
  \widetilde{G}\rtimes S)\,.
 \end{equation*}
We can only prove the following special case.

\begin{theorem} \label{thm:main}
Let $f$ be an invertible polynomial invariant with respect to a subgroup $S$ of $S_n$, let $G$ be a subgroup
of $G_f$ preserved by $S$, and let $T$ be a subgroup of $S$. Then one has
 \begin{equation}\label{eq:main}
  \chi^{\rm orb}(G_f\rtimes S/\{e\}\rtimes T, G\rtimes S)=
  \chi^{\rm orb}(G_{\widetilde{f}}\rtimes S/G_{\widetilde{f}}\rtimes T, 
  \widetilde{G}\rtimes S).
 \end{equation}
\end{theorem}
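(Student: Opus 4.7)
My plan is to compute both sides explicitly as sums indexed by $T$-conjugacy classes and match them term-by-term via Pontryagin duality between the $T$-modules $G_f/G$ and $\widetilde G$.

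For the left-hand side, parametrize $(G_f\rtimes S)/(\{e\}\rtimes T)$ by pairs $(g,\bar\sigma)\in G_f\times(S/T)$; the $(G\rtimes S)$-action reads $(g_0,\sigma_0)\cdot(g,\bar\sigma)=(g_0\varphi(\sigma_0)(g),\overline{\sigma_0\sigma})$. Since $S$ acts transitively on $S/T$, every orbit meets the slice $\{\bar\sigma=\bar e\}$, so orbits biject with $T$-orbits on $G_f/G$. A short computation identifies the stabilizer of $(g,\bar e)$ with the crossed-homomorphism graph $\{(g\varphi(\sigma)(g)^{-1},\sigma):\sigma\in T_{\bar g}\}$, abstractly isomorphic via projection to the $T$-stabilizer $T_{\bar g}$ of $\bar g$. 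Using the standard identity $\chiorb(H/K,H)=\vert{\rm Conj}\,K\vert$ for a transitive $H$-set, the left-hand side collapses to
$$\sum_{[\bar g]\in T\backslash(G_f/G)}\vert{\rm Conj}\,T_{\bar g}\vert=\chiorb(G_f/G,T).$$
For the right-hand side, the projection $(\alpha,\sigma)\mapsto\bar\sigma$ identifies $(G_{\widetilde f}\rtimes S)/(G_{\widetilde f}\rtimes T)$ with $S/T$, on which $\widetilde G\rtimes S$ acts transitively through its $S$-factor with stabilizer $\widetilde G\rtimes T$; the right-hand side is therefore $\vert{\rm Conj}(\widetilde G\rtimes T)\vert$.

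The theorem thus reduces to proving $\chiorb(G_f/G,T)=\vert{\rm Conj}(\widetilde G\rtimes T)\vert$. I would expand both sides as sums over $[\tau]\in{\rm Conj}\,T$: Burnside averaging gives $\chiorb(G_f/G,T)=\sum_{[\tau]}\vert(G_f/G)^\tau/C_T(\tau)\vert$, while the standard conjugacy description for a semidirect product with abelian kernel gives $\vert{\rm Conj}(\widetilde G\rtimes T)\vert=\sum_{[\tau]}\vert\widetilde G_\tau/C_T(\tau)\vert$, where $\widetilde G_\tau=\widetilde G/(\varphi^*(\tau)-1)\widetilde G$. Pontryagin duality between $G_f/G$ and $\widetilde G$ is $T$-equivariant (as $\varphi^*$ is the transpose of $\varphi$) and yields a canonical $C_T(\tau)$-equivariant isomorphism $((G_f/G)^\tau)^*\cong\widetilde G_\tau$. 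Term-by-term matching then reduces to the lemma that for any finite abelian group $A$ with an action of a finite group $H$ one has $\vert A/H\vert=\vert A^*/H\vert$. This follows from the Burnside formula $\vert A/H\vert=\frac{1}{\vert H\vert}\sum_h\vert A^h\vert$ combined with $\vert A^h\vert=\vert A_h\vert=\vert(A^*)^h\vert$, the first equality being immediate from the exact sequence $0\to A^h\to A\xrightarrow{h-1}A\to A_h\to 0$ and the second from Pontryagin duality.

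The main obstacle is precisely this last step: the Pontryagin isomorphism matches invariants with \emph{co}invariants, so there is no equivariant bijection of underlying $C_T(\tau)$-sets, only an equality of orbit counts, which is what the averaging-plus-exact-sequence argument delivers. Notably, the PC condition plays no role in this argument, consistent with the view that orbifold Euler characteristics behave more rigidly than equivariant ones.
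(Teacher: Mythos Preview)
Your argument is correct and takes a genuinely different route from the paper's proof.

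The paper expands both sides directly from the defining double sum $\chi^{\rm orb}(X,H)=\frac{1}{|H|}\sum_{gh=hg}\chi(X^{\langle g,h\rangle})$. For each commuting pair $(\sigma,\sigma')\in S^2$ it introduces the endomorphisms $A_\delta(\kappa)=\kappa\,\delta(\kappa)^{-1}$ of $G_f$ (and their Pontryagin duals $A_\delta^*$ on $G_{\widetilde f}$), rewrites the fixed-point and commutation conditions in terms of these, and reduces the equality to the numerical identity
\[
\frac{|A_\sigma^{-1}(G)\cap A_{\sigma'}^{-1}(G)|}{|G|}
=\frac{|A_\sigma^*(\widetilde G)\cap A_{\sigma'}^*(\widetilde G)|\cdot|\ker A_{\sigma'}^*\cap\widetilde G|\cdot|\ker A_\sigma^*\cap\widetilde G|}{|\widetilde G|},
\]
which it proves via the lemma $\widetilde{A^{-1}(G)}=A^*(\widetilde G)$.

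Your approach instead first \emph{collapses} both sides using the induction identity $\chi^{\rm orb}(H/K,H)=|{\rm Conj}\,K|$: the left-hand side becomes $\chi^{\rm orb}(G_f/G,T)$ and the right-hand side becomes $|{\rm Conj}(\widetilde G\rtimes T)|$. You then match these term-by-term over $[\tau]\in{\rm Conj}\,T$ using the $T$-equivariant Pontryagin duality $(G_f/G)^*\cong\widetilde G$ together with the elementary lemma $|A/H|=|A^*/H|$ for a finite abelian $H$-module $A$. This is more conceptual: it shows that only the $T$-module $G_f/G$ (not $G_f$ and $G$ separately) enters, and it packages the identity as a clean duality between an orbifold Euler characteristic and a conjugacy-class count. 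The paper's computation, by contrast, is entirely self-contained (it does not invoke the induction behaviour of $\chi^{\rm orb}$) and keeps the pair $(\sigma,\sigma')$ visible throughout, at the cost of more bookkeeping. Both arguments confirm that the parity condition plays no role here.
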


\begin{proof}
 Let us compute the left hand side of (\ref{eq:main}). We have
 \begin{eqnarray*}
 \lefteqn{\chi^{\rm orb}(G_f\rtimes S/\{e\}\rtimes T, G\rtimes S)} \\
 &=&\frac{1}{\vert G\rtimes S\vert}
 \sum_{{((g,\sigma),(g',\sigma'))\in \widehat{G}^2:}\atop{(g,\sigma)(g',\sigma')=(g',\sigma')(g,\sigma)}}
 \left|\left(G_f\rtimes S/\{e\}\rtimes T\right)^{\langle(g,\sigma),(g',\sigma')\rangle}\right|\\
 &=&
 \frac{1}{\vert S\vert}\sum_{{(\sigma,\sigma')\in S^2:}\atop{\sigma\sigma'=\sigma'\sigma}}
 \frac{1}{\vert G\vert}
 \sum_{{(g,g')\in G^2:}\atop{(g,\sigma)(g',\sigma')=(g',\sigma')(g,\sigma)}}
 \left|\left(G_f\rtimes S/\{e\}\rtimes T\right)^{\langle(g,\sigma),(g',\sigma')\rangle}\right|.
 \end{eqnarray*}
 Let $(\kappa,\rho)\in G_f\rtimes S$ be a representative of a point in $G_f\rtimes S/\{e\}\rtimes T$.
 An element $(g,\sigma)\in G\rtimes S$ acts on it by the formula
 $$
 (g,\sigma)(\kappa,\rho)=(g\cdot\sigma(\kappa),\sigma\rho).
 $$
 Therefore this point is fixed by $(g,\sigma)$ if and only if $\sigma\rho=\rho\tau$ for $\tau\in T$
 (i.e.\ $\rho^{-1}\sigma\rho\in T$) and $g\cdot\sigma(\kappa)=\kappa$ (i.e.\ $g=\kappa(\sigma(\kappa))^{-1}$).
 For $\delta\in S$, let $A_{\delta}:G_f\to G_f$ be the homomorphism defined by
 $$
 A_{\delta}(\kappa)=\kappa(\delta(\kappa))^{-1}\,.
 $$
 Therefore the point is fixed by $(g,\sigma)$ if and only if $\rho^{-1}\sigma\rho\in T$ and $g=A_{\sigma}(\kappa)$.
 In the same way it is fixed by $(g',\sigma')$ if and only if $\rho^{-1}\sigma'\rho\in T$ and
 $g'=A_{\sigma'}(\kappa)$.
 
 The elements $(g,\sigma)$ and $(g',\sigma')$ commute if and only if $\sigma\sigma'=\sigma'\sigma$ and
 $g\sigma(g')=g'\sigma'(g)$. The latter condition can be rewritten in the form
 \begin{equation}\label{eq:commute}
  A_{\sigma'}(g)=A_{\sigma}(g').
 \end{equation}
 One can see that, for $g=A_{\sigma}(\kappa)$ and $g'=A_{\sigma'}(\kappa)$, Equation~(\ref{eq:commute})
 holds automatically. This follows from the fact that, for commuting $\sigma$ and $\sigma'$, the homomorphisms
 $A_{\sigma}$ and $A_{\sigma'}$ also commute. Indeed
 \begin{eqnarray}
 \lefteqn{\kappa(\sigma(\kappa))^{-1} \left(\sigma'\left(\kappa(\sigma(\kappa))^{-1}\right)\right)^{-1}=
  \kappa(\sigma(\kappa))^{-1}(\sigma'(\kappa))^{-1}\sigma'\sigma(\kappa)} \nonumber\\
  &=&\kappa(\sigma'(\kappa))^{-1}(\sigma(\kappa))^{-1}\sigma\sigma'(\kappa)=
  \kappa(\sigma'(\kappa))^{-1} \left(\sigma\left(\kappa(\sigma'(\kappa))^{-1}\right)\right)^{-1}.
 \end{eqnarray}
 
 The conditions on $g$ and $g'$ do not include $\rho$. Therefore, for fixed $\sigma$ and $\sigma'$,
 \begin{eqnarray}
 \lefteqn{\sum_{{(g,g')\in G^2:}\atop{(g,\sigma)(g',\sigma')=(g',\sigma')(g,\sigma)}}
 \left|\left(G_f\rtimes S/\{e\}\rtimes T\right)^{\langle(g,\sigma),(g',\sigma')\rangle}\right|} \nonumber \\
 &=&\frac{\vert\{\rho:\rho^{-1}\sigma\rho\in T, \rho^{-1}\sigma'\rho\in T\}\vert}{\vert T\vert} \nonumber \\
  &  & {} \qquad \qquad\cdot\vert\{\kappa\in G_f: A_{\sigma}(\kappa)\in G, A_{\sigma'}(\kappa)\in G \}\vert. \label{eq:orbl}
 \end{eqnarray}
 The latter factor is equal to $\vert A_{\sigma}^{-1}(G)\cap A_{\sigma'}^{-1}(G)\vert$.
 
 Now let us compute the right hand side of (\ref{eq:main}). 
 For a homomorphism $A:G_f\to G_f$, let $A^*:G_{\widetilde{f}}\to G_{\widetilde{f}}$ be the dual homomorphism
 defined by $\langle g, A^*\alpha \rangle=\langle Ag, \alpha \rangle$. One can see that the dual to the
 homomorphism $A_{\delta}$ is the homomorphism $A^*_{\delta}$ which sends $\alpha\in G_{\widetilde{f}}$ to
 $\alpha(\delta^*(\alpha))^{-1}$. Indeed 
 \begin{eqnarray*}
 \langle g(\delta(g))^{-1}, \alpha\rangle&=&\langle g, \alpha\rangle\cdot\langle \delta(g)^{-1}, \alpha\rangle=
 \langle g, \alpha\rangle\cdot\langle \delta(g), \alpha\rangle^{-1}\\
 &=& \langle g, \alpha\rangle\cdot\langle g, \delta^*(\alpha)\rangle^{-1}=
 \langle g, \alpha (\delta^*(\alpha))^{-1}\rangle.
 \end{eqnarray*}
 
 One has
 $G_{\widetilde{f}}\rtimes S/G_{\widetilde{f}}\rtimes T = S/T$. As above, two elements $(\alpha,\sigma)$ and
 $(\alpha',\sigma')$ from $\widetilde{G}\rtimes S$ have a fixed point in $S/T$ represented by $\rho\in S$ if
 and only if $\rho^{-1}\sigma\rho\in T$ and $\rho^{-1}\sigma'\rho\in T$. Two elements $(\alpha,\sigma)$ and
 $(\alpha',\sigma')$ commute if and only if $A_{\sigma}^*(\alpha')=A_{\sigma'}^*(\alpha)$. Therefore
 \begin{eqnarray}
 \lefteqn{\chi^{\rm orb}\left(G_{\widetilde{f}}\rtimes S/G_{\widetilde{f}}\rtimes T,
  \widetilde{G}\rtimes S\right)} \nonumber \\ 
  &=&\frac{1}{\vert G\rtimes S\vert}
 \sum_{{((\alpha,\sigma),(\alpha',\sigma'))\in \widehat{\widetilde{G}}^2:}\atop
 {(\alpha,\sigma)(\alpha',\sigma')=(\alpha',\sigma')(\alpha,\sigma)}}
 \left|\left(G_{\widetilde{f}}\rtimes S/
 G_{\widetilde{f}}\rtimes T\right)^{\langle(\alpha,\sigma),(\alpha',\sigma')\rangle}\right| \nonumber \\
 &=&
 \frac{1}{\vert S\vert}\sum_{{(\sigma,\sigma')\in S^2:}\atop{\sigma\sigma'=\sigma'\sigma}}
 \frac{1}{\vert \widetilde{G}\vert}
 \sum_{{(\alpha,\alpha')\in\widetilde{G}^2:}\atop{(\alpha,\sigma)(\alpha',\sigma')=(\alpha',\sigma')(\alpha,\sigma)}}
 \left|\left(G_{\widetilde{f}}\rtimes S/
 G_{\widetilde{f}}\rtimes T\right)^{\langle(\alpha,\sigma),(\alpha',\sigma')\rangle}\right| \nonumber \\
 &=&\frac{1}{\vert S\vert}\sum_{{(\sigma,\sigma')\in S^2:}\atop{\sigma\sigma'=\sigma'\sigma}}
 \frac{1}{\vert \widetilde{G}\vert}
 \frac{\left|\left\{\rho:\rho^{-1}\sigma\rho\in T, \rho^{-1}\sigma'\rho\in T\right\}\right|}{\left|T\right|} \nonumber  \\
 &  & {} \qquad \qquad
 \cdot\left|\{(\alpha,\alpha')\in\widetilde{G}^2:(\alpha,\sigma)(\alpha',\sigma')=(\alpha',\sigma')(\alpha,\sigma)\}\right|. \label{eq:orbr}
 \end{eqnarray}
 
 The latter factor can be computed in the following way. The element
 $\beta=A_{\sigma}^*(\alpha')=A_{\sigma'}^*(\alpha)$ may be an arbitrary element of
 $A_{\sigma}^*(\widetilde{G})\cap A_{\sigma'}^*(\widetilde{G})$. For a fixed $\beta$ of this sort,
 the number of the elements $\alpha\in\widetilde{G}$ such that $\beta=A_{\sigma'}^*(\alpha)$ is equal to
 $\vert {\rm Ker\,}A_{\sigma'}^*\cap \widetilde{G}\vert$, the number of the elements $\alpha'\in\widetilde{G}$
 such that $\beta=A_{\sigma}^*(\alpha')$ is equal to $\vert {\rm Ker\,}A_{\sigma}^*\cap \widetilde{G}\vert$.
 Thus this factor is equal to
 $$
 \vert A_{\sigma}^*(\widetilde{G})\cap A_{\sigma'}^*(\widetilde{G})\vert\cdot
 \vert {\rm Ker\,}A_{\sigma'}^*\cap \widetilde{G}\vert\cdot \vert {\rm Ker\,}A_{\sigma}^*\cap \widetilde{G}\vert\,.
 $$
 
 Equations (\ref{eq:orbl}) and (\ref{eq:orbr}) imply that the orbifold Euler characteristics
 $\chi^{\rm orb}(G_f\rtimes S/\{e\}\rtimes T, G\rtimes S)$ and
 $\chi^{\rm orb}\left(G_{\widetilde{f}}\rtimes S/G_{\widetilde{f}}\rtimes T, \widetilde{G} \rtimes S\right)$ are linear
 combinations respectively of the numbers
 \begin{equation}
 \frac{\vert A_{\sigma}^{-1}(G)\cap A_{\sigma'}^{-1}(G)\vert}{\vert G\vert} \label{eq:A}
 \end{equation}
 and
 \begin{equation}
  \frac{\vert A_{\sigma}^*(\widetilde{G})\cap A_{\sigma'}^*(\widetilde{G})\vert\cdot
  \vert {\rm Ker\,}A_{\sigma'}^*\cap \widetilde{G}\vert\cdot \vert {\rm Ker\,}A_{\sigma}^*\cap \widetilde{G}\vert}
  {\vert\widetilde{G}\vert} \label{eq:A^*}
 \end{equation}
with the same coefficients. Therefore coincidence of these numbers implies the statement.
 
In order to prove the coincidence of (\ref{eq:A}) and (\ref{eq:A^*}), we need the following fact.
\begin{lemma} \label{lem:Adual}
Let $A$ be an endomorphism of $G_f$ and let $A^*$ be the corresponding dual endomorphism of $G_{\widetilde{f}}$. Then the subgroup $\widetilde{A^{-1}(G)}$ dual to $A^{-1}(G)$ is $A^*(\widetilde{G})$.
\end{lemma} 
 
\begin{proof}
An element $h \in G_f$ belongs to $\widetilde{A^*(\widetilde{G})}$ if and only if for all $\gamma \in \widetilde{G}$ one has $\langle A^*\gamma, h \rangle=1$. This is equivalent to $\langle \gamma, Ah \rangle=1$ and thus $Ah \in G$.
\end{proof}

We have 
\begin{eqnarray*}
\lefteqn{ \frac{\vert A_{\sigma}^{-1}(G)\cap A_{\sigma'}^{-1}(G) \vert}{\vert G\vert} = \frac{\vert G_f \vert}{\vert G \vert \left| \widetilde{A_{\sigma}^{-1}(G)} + \widetilde{A_{\sigma'}^{-1}(G)} \right|}} \\
&  = &  \frac{\vert G_f \vert}{\vert G \vert \vert A_{\sigma}^*(\widetilde{G}) + A_{\sigma'}^*(\widetilde{G}) \vert}  
 =  \frac{\vert G_f \vert \vert A_{\sigma}^*(\widetilde{G}) \cap A_{\sigma'}^*(\widetilde{G}) \vert}{\vert G \vert \vert A_{\sigma}^*(\widetilde{G})\vert \vert A_{\sigma'}^*(\widetilde{G}) \vert} \\
& = & 
\frac{\vert G_f \vert \vert A_{\sigma}^*(\widetilde{G}) \cap A_{\sigma'}^*(\widetilde{G}) \vert \vert  {\rm Ker\,}A^*_{\sigma} \cap \widetilde{G} \vert \vert  {\rm Ker\,}A^*_{\sigma'} \cap \widetilde{G} \vert}{ \vert G \vert \vert \widetilde{G} \vert \vert \widetilde{G} \vert}
\end{eqnarray*}
what coincides with (\ref{eq:A^*}). (We use the obvious relation 
$\vert A_\delta^*(\widetilde{G}) \vert = \frac{ \vert \widetilde{G} \vert}{\vert  {\rm Ker\,}A^*_{\delta} \cap \widetilde{G} \vert }$.)
\end{proof}

Let $f$ be the periodic loop 
\begin{equation}
f(x_1, \ldots , x_{k \ell}) = x_1^{p_1}x_2 +x_2^{p_2}x_3 + \cdots + x_{k \ell}^{p_\ell} x_1, \label{eq:loop}
\end{equation}
where $p_{i+\ell}=p_i$. Let $S$ be the subgroup of the group of permutations of $k\ell$ variables generated by the permutation which sends the variable $x_i$ to the variable $x_{i+\ell}$ where the index $i$ is considered modulo $k\ell$. This permutation preserves $f$. One can see that the group $S$ does not satisfy PC if and only if $k$ is even and $\ell$ is odd. Let $G$ be a subgroup of $G_f$ invariant with respect to $S$.

\begin{theorem} If $S$ satisfies PC, then one has
\[ \overline{\chi}^{\rm orb}(V_f, G \rtimes S) = (-1)^{k\ell} \overline{\chi}^{\rm orb}(V_{\widetilde{f}}, \widetilde{G} \rtimes S)
\]
\end{theorem}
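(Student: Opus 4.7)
The plan is to deduce the theorem by combining the non-abelian equivariant Saito duality (Equation~(\ref{eq:NC_Saito})) with Theorem~\ref{thm:main}, using the fact that for a loop polynomial $G_f$ acts \emph{freely} on $V_f$. Indeed, for $f=\sum_i x_i^{p_i} x_{i+1}$ (indices mod $k\ell$), the relations $\lambda_i^{p_i}\lambda_{i+1}=1$ defining $G_f$ give $\lambda_{i+1}=\lambda_i^{-p_i}$; hence $\lambda_i=1$ for any single $i$ propagates cyclically to $\lambda=e$. Every non-trivial $\lambda\in G_f$ therefore has all $\lambda_i\ne 1$ and can fix only the origin, which is not in $V_f$. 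The same applies to $\widetilde{f}$, which is again a loop.

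By \cite[Proposition~3]{EG-NC}, $\chi^{G_f\rtimes S}(V_f)$ lies in the subgroup $A^{\rtimes}(G_f\rtimes S)$, so it is a linear combination of classes $[G_f\rtimes S/H\rtimes T]$. Freeness forces $K\cap G_f=\{e\}$ for every stabilizer $K$ of a point of $V_f$; since the intersection with $G_f$ is conjugation-invariant up to the $S$-action (which fixes $\{e\}$), every contributing class has $H=\{e\}$. Consequently
\[
\chi^{G_f\rtimes S}(V_f)=\sum_T a_T\,[G_f\rtimes S/\{e\}\rtimes T]
\]
for integers $a_T$ indexed by conjugacy classes of $T\subset S$. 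Applying $\chi^{\rm orb}\circ{\rm Red}^{G_f\rtimes S}_{G\rtimes S}$ term by term, and using Theorem~\ref{thm:main} together with the identity $D^{\rtimes}_{G_f\rtimes S}([G_f\rtimes S/\{e\}\rtimes T])=[G_{\widetilde{f}}\rtimes S/G_{\widetilde{f}}\rtimes T]$, yields
\[
\chi^{\rm orb}(V_f,G\rtimes S)=\chi^{\rm orb}\circ{\rm Red}^{G_{\widetilde{f}}\rtimes S}_{\widetilde{G}\rtimes S}\bigl(D^{\rtimes}_{G_f\rtimes S}\,\chi^{G_f\rtimes S}(V_f)\bigr).
\]

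Equation~(\ref{eq:NC_Saito}), valid since $S$ satisfies PC, now gives $D^{\rtimes}_{G_f\rtimes S}(\overline{\chi}^{G_f\rtimes S}(V_f))=(-1)^{k\ell}\overline{\chi}^{G_{\widetilde{f}}\rtimes S}(V_{\widetilde{f}})$. Substituting this, and applying Theorem~\ref{thm:main} once more with $(f,G)\leftrightarrow(\widetilde{f},\widetilde{G})$ at $T=S$ to absorb the point-class correction, yields the desired
\[
\overline{\chi}^{\rm orb}(V_f,G\rtimes S)=(-1)^{k\ell}\,\overline{\chi}^{\rm orb}(V_{\widetilde{f}},\widetilde{G}\rtimes S).
\]
The main obstacle is the decomposition in the second paragraph. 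Freeness is immediate, but deriving the clean form $\sum_T a_T[G_f\rtimes S/\{e\}\rtimes T]$ requires ruling out contributions from ``twisted'' stabilizers (subgroups of $G_f\rtimes S$ that project isomorphically to $T\subset S$ but are not conjugate to $\{e\}\rtimes T$, corresponding to non-trivial classes in $H^1(T,G_f)$), and rests on the input from \cite{EG-NC}.
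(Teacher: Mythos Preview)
Your proof is correct, and the route is close to the paper's but with one genuine difference worth noting.

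Both arguments hinge on the same phenomenon: for a loop polynomial, only isotropy classes of the extremal form $\{e\}\rtimes T$ occur in $\chi^{G_f\rtimes S}(V_f)$, so Theorem~\ref{thm:main} applies term by term. You obtain this by observing that $G_f$ acts freely on $V_f$ (which is immediate for a loop) and then invoking \cite[Proposition~3]{EG-NC} to rule out twisted complements; the paper instead cites the explicit decomposition from \cite[Proof of Theorem~1]{EG-NC} together with the combinatorial fact that a loop has no invertible restriction to a proper coordinate subspace, and thereby gets the sharper statement that only the single term $(-1)^{k\ell-1}[G_f\rtimes S/\{e\}\rtimes S]$ survives. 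Your freeness argument and the paper's ``no invertible restriction'' argument are two faces of the same coin.

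The substantive difference is in how the two sides are compared. The paper writes down the analogous two-term expression for $\overline{\chi}^{G_{\widetilde f}\rtimes S}(V_{\widetilde f})$ directly (since $\widetilde f$ is again a loop) and then matches the four terms pairwise using Theorem~\ref{thm:main} in both directions; Equation~(\ref{eq:NC_Saito}) is never invoked. You instead keep the decomposition of $\chi^{G_f\rtimes S}(V_f)$ unspecified beyond $H=\{e\}$, push it across via Equation~(\ref{eq:NC_Saito}), and then apply Theorem~\ref{thm:main}. Your approach is closer in spirit to the abelian template of Section~\ref{sect:abelian_Euler} and would generalize to any $f$ for which one knows a priori that only extremal isotropy classes appear; the paper's approach is more self-contained for the loop case since it avoids the full strength of the non-abelian Saito duality. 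The caveat you flag about $H^1(T,G_f)$ is exactly right and is indeed handled by \cite[Proposition~3]{EG-NC}, so there is no gap.
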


\begin{proof}
In \cite[Proof of Theorem~1]{EG-NC}, it was shown that, for a group $S$ satisfying PC, one has 
\begin{eqnarray*}
\overline{\chi}^{G_f}(V_f)
& =  & (-1)^{n-1}[G_f \rtimes S/\{ e\} \times S] + \sum_I \sum_T a_{I,T} [G_f \rtimes S^I/G_f^I \rtimes T ] \\
& & {}- [G_f \rtimes S/G_f \rtimes S] 
\end{eqnarray*}
where the summation is over representations $I$ of the orbits of the $S$-action on the set $2^{I_0}$ of subsets
of $I_0=\{1, \ldots , n\}$ such that $I \neq \emptyset$, $I \neq I_0$, and $f|_{(\CC^n)^I}$ is an invertible
polynomial (i.e.\ contains $|I|$ monomials) and over representatives $T$ of the conjugacy classes of the isotropy
subgroup $S^I$ of $I \in 2^{I_0}$. In the case under consideration ($f$ is the loop (\ref{eq:loop})),
for any proper non-empty subset $I \subset I_0$, $f|_{(\CC^n)^I}$ consists of less than $|I|$ monomials. Therefore
\begin{eqnarray*}
\overline{\chi}^{G_f}(V_f) & =  & (-1)^{k\ell-1}[G_f \rtimes S/\{ e\} \times S] - [G_f \rtimes S/G_f \rtimes S] ,\\
\overline{\chi}^{G_{\widetilde{f}}}(V_{\widetilde{f}}) & =  & (-1)^{k\ell-1}[G_{\widetilde{f}} \rtimes S/\{ e\} \times S] - [G_{\widetilde{f}} \rtimes S/G_{\widetilde{f}} \rtimes S] .
\end{eqnarray*}
Hence
\begin{eqnarray*}
\overline{\chi}^{\rm orb}(V_f,G \rtimes S) & =  & (-1)^{k\ell-1}\chi^{\rm orb}(G_f \rtimes S/\{ e\} \times S, G \rtimes S)  \\
& & {} - \chi^{\rm orb}(G_f \rtimes S/G_f \rtimes S,G\rtimes S) ,\\
\overline{\chi}^{\rm orb}(V_{\widetilde{f}},\widetilde{G}\rtimes S) & =  & (-1)^{k\ell-1}\chi^{\rm orb}(G_{\widetilde{f}} \rtimes S/\{ e\} \times S, \widetilde{G}\rtimes S)\\
& & {} - \chi^{\rm orb}(G_{\widetilde{f}} \rtimes S/G_{\widetilde{f}} \rtimes S, \widetilde{G}\rtimes S) .
\end{eqnarray*}
Now the statement follows from Theorem~\ref{thm:main}.
\end{proof}

\bigskip
\noindent Leibniz Universit\"{a}t Hannover, Institut f\"{u}r Algebraische Geometrie,\\
Postfach 6009, D-30060 Hannover, Germany \\
E-mail: ebeling@math.uni-hannover.de\\

\medskip
\noindent Moscow State University, Faculty of Mechanics and Mathematics,\\
Moscow, GSP-1, 119991, Russia\\
E-mail: sabir@mccme.ru
\end{document}